\theoremstyle{plain}
\newtheorem{theorem}{Theorem}
\newtheorem{lemma}{Lemma}
\newtheorem{definition}{Definition}
\newtheorem{claim}{Claim}
\newcolumntype{P}[1]{>{\centering\arraybackslash}p{#1}}
\title{Extreme points of general transportation polytopes}
\author{ Patrice Koehl \\
	Department of Computer Science\\
	University of California, Davis\
	Davis, CA 95616 \\
	\texttt{koehl@cs.ucdavis.edu}
}
\date{}
\begin{document}
\maketitle

\begin{abstract}
Transportation matrices are $m\times n$ non-negative matrices whose row sums
and row columns are equal to, or dominated above with given integral vectors $R$ and $C$.
Those matrices belong to a convex polytope whose extreme points have been previously characterized.
In this article, a more general set of non-negative transportation matrices is considered,
whose row sums are bounded by two integral non-negative vectors $R_{min}$ and $R_{max}$ and column sums
are bounded by two integral non-negative vectors $C_{min}$ and $C_{max}$.
It is shown that this set is also a convex polytope whose extreme points are then fully characterized.
\end{abstract}

\keywords{transportation polytopes \and assignment problems \and stochastic matrices}


\section{Introduction}
\label{sec:intro}

Imagine that there are $n$ agents that are assigned to perform $m$ tasks, and assume that each agent can perform an integer number of tasks and each task can only be assigned to an integer number of agents.
A manager in charge of the assignment agent / task will consider the ``cost" of assigning each agent to each task and will
design an ``optimal assignment", namely an assignment that leads to a minimal overall cost.
Finding a solution to this seemingly simple practical problem has led to the development of a gem in the mathematics and statistics communities, namely the optimal transport (OT) problem.

A mathematical formulation of the problem is to consider two sets of points $S_{1}$ of size $n$ (the agents) and $S_{2}$ of size $m$ (the tasks).
Each point $k$ in $S_1$ (resp $S_2$) is assigned a ``mass" $m_{1}(k)$, the amount of tasks it can perform (resp $m_{2}(k)$, the number of agents needed to perform this task). 
The cost of an assignment between $S_1$ and $S_2$ is encoded as a non-negative matrix $T=[T(i,j)]$ with $i\in\{1\ldots,n\}$ and $j\in\{1\ldots,m\}$.
The OT problem can then be
formulated as finding a matrix $G$ of correspondence between points in $S_1$ and points in $S_2$ that minimizes the total transport cost
 $U$ defined as 
 \begin{equation*}
U(G)= \sum_{i=1}^{n} \sum_{j=1}^{m} T(i,j) G(i,j).
\label{eqn:energy} 
\end{equation*} 
The minimum of $U$ is to be found for the values of $G(i,j)$ that satisfy the following constraints
\begin{subequations}
\label{eqn:constraintsOT}
\begin{align}
& \forall (i,j), \quad G(i,j) \ge 0, \label{eqn:constraintsOT1} \\
 &  \forall i, \ \ \quad \sum\limits_j G(i,j) =m_1(i), \label{eqn:constraintsOT2}\\
 &  \forall j, \ \ \quad \sum\limits_i G(i,j) =m_2(j).  \label{eqn:constraintsOT3}
 \end{align}
  \end{subequations}
What makes the OT problem so interesting is that its solution includes two essential components.
First, it defines a distance between the distributions considered ($m_1$ and $m_2$). 
These distances have enabled statisticians and mathematicians to derive a geometric structure
on the space of probability distributions (\cite{Villani:2008, Peyre:2018}).
Second, it also provides the optimal transportation plan $G$ between the distributions; this optimal plan defines a
registration, thereby enabling alignment between the distributions.
Applications of OT have exploded in the recent years, in domains such as applied mathematics, machine learning, computer vision, 
and linguistics (see \cite{Villani:2008, Cotar:2013, Santambrogio:2015, Peyre:2018}, among others).

The formulation defined above corresponds to the Monge-Kantorovich version of the optimal transport problem (\cite{Kantorovich:1942, Villani:2008}). 
A simplified version of the OT problem is to set the masses to 1 (i.e. each agent can only perform one task and each task is performed by one agent) and to assume
balance (same number of agents and tasks). The transportation matrix $A$ then becomes integral (its values are integers) and satisfies the following constraints:
\begin{subequations}
\label{eqn:constraintsA}
\begin{align}
& \forall (i,j), \quad A(i,j) \in \{0,1\}, \label{eqn:constraintsA1} \\
 &  \forall i, \ \ \quad \sum\limits_j A(i,j) = 1, \label{eqn:constraintsA2}\\
 &  \forall j, \ \ \quad \sum\limits_i A(i,j) = 1.  \label{eqn:constraintsA3}
 \end{align}
  \end{subequations}
This simplified problem that has its root in Monge's original transport problem \cite{Monge:1781} is itself a classical problem in combinatorial optimization referred to as the assignment problem or alternatively, using the language of graph theory, as the bipartite weighted matching problem (for a comprehensive analysis of assignment problems, see for example \cite{Burkard:2009}).

Our focus in this paper is on the structures of the set of matrices that either satisfy the constraints \ref{eqn:constraintsOT} or \ref{eqn:constraintsA}.
Those sets of matrices belong to so-called transportation polytopes that have been studied extensively in the literature (see for example, \cite{Birkhoff:1946, Brualdi:1976, Brualdi:1980, Muchlis:1992, Gill:2009, Ziegler:2012, Loera:2013, Cao:2019, Chen:2021}).
In the special case $n=m$ and $m_1(i)=m_2(j) = 1 \quad \forall (i,j)$, the matrices $G$ are doubly stochastic.
The set of all those matrices form a convex polytope $\Omega_n$. Under the same conditions, the set of matrices $A$ is the set of permutations $P_n$.
The set $P_n$ is the set of extreme points of $\Omega_n$; as such, any matrix $G$ in $\Omega_n$ can be expressed as a linear combination of
permutation matrices in $P_n$:

\begin{theorem} 
\label{th:th1}
(\cite{Birkhoff:1946, Neumann:1953}) An $n\times n$ matrix G is doubly stochastic if and only if there is a finite set of permutations matrices $P_1,\ldots,P_N$ and corresponding non-negative real numbers $\alpha_1, \ldots, \alpha_N$ with $\alpha_1+\ldots+\alpha_N=1$ such that $A=\alpha_1 P_1 + \ldots + \alpha_N P_N$.
\end{theorem}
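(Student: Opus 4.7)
The plan is to prove both directions of the equivalence separately, with the ``if'' direction being essentially immediate and the ``only if'' direction handled by induction on the number of strictly positive entries of $G$.

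For the easy direction, suppose $G = \alpha_1 P_1 + \ldots + \alpha_N P_N$ with non-negative $\alpha_k$ summing to one and each $P_k$ a permutation matrix. Each $P_k$ has non-negative entries and row/column sums equal to $1$, so the same property is preserved by the convex combination, and $G$ is doubly stochastic.

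For the reverse direction, let $\nu(G)$ denote the number of strictly positive entries of $G$. The base case is $\nu(G) = n$: then every row and every column of $G$ has exactly one positive entry, which must equal $1$ since row sums are $1$, so $G$ is itself a permutation matrix and the representation is trivial. For the inductive step, the key combinatorial fact to establish is that any doubly stochastic $G$ admits a permutation $\sigma$ with $G(i,\sigma(i)) > 0$ for every $i$. I expect this to be the main obstacle, as it requires translating the analytic doubly stochastic condition into a purely combinatorial matching statement. The standard route goes through Hall's marriage theorem applied to the bipartite graph $B_G$ whose edges are those pairs $(i,j)$ with $G(i,j) > 0$: for any subset $S$ of rows, the submatrix of $G$ restricted to rows in $S$ has total mass exactly $|S|$, concentrated in columns belonging to the neighborhood $N(S)$; since each column of $G$ sums to $1$, the total mass in those columns is at most $|N(S)|$, forcing $|N(S)| \geq |S|$. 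Hall's condition then produces a perfect matching, which is the desired permutation $\sigma$, and I denote its permutation matrix by $P$.

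With $P$ in hand, I would set $\alpha = \min_{i} G(i,\sigma(i))$, which is strictly positive by construction. If $\alpha = 1$ then $G = P$ and we are done. Otherwise I would define
\begin{equation*}
G' = \frac{1}{1-\alpha}\bigl(G - \alpha P\bigr),
\end{equation*}
and a direct verification shows that $G'$ has non-negative entries and row and column sums equal to $1$; moreover, by the minimality in the choice of $\alpha$, at least one entry of $G$ along the support of $P$ becomes zero in $G - \alpha P$, so $\nu(G') < \nu(G)$. The inductive hypothesis applied to $G'$ yields a convex representation $G' = \sum_k \beta_k Q_k$ by permutation matrices, and then
\begin{equation*}
G = \alpha P + (1-\alpha)\sum_k \beta_k Q_k
\end{equation*}
is the required representation, completing the induction.
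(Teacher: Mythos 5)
Your proof is correct, but it takes a genuinely different route from the paper's. The paper argues geometrically: it notes that $\Omega_n$ is a non-empty compact convex set and hence the convex hull of its extreme points, checks that permutation matrices are extreme, and then shows that any matrix with a fractional entry is not extreme by locating a closed cycle of fractional entries in the bipartite row--column graph and writing $A = \frac{1}{2}(E_1+E_2)$ with $E_1 = A+\epsilon N$ and $E_2 = A-\epsilon N$ for an alternating $\pm 1$ matrix $N$ supported on that cycle. You instead give the classical constructive induction on the number of positive entries: Hall's marriage theorem supplies a permutation in the support of $G$ (your verification of Hall's condition, comparing the mass $|S|$ carried by the rows of $S$ with the mass at most $|N(S)|$ available in their neighborhood, is exactly right), and the peeling step $G' = (G-\alpha P)/(1-\alpha)$ strictly decreases the count of positive entries, so the induction terminates. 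Each approach buys something: yours is algorithmic, avoids invoking the Minkowski/Krein--Milman fact that a compact convex set is the hull of its extreme points, and gives the explicit bound of at most $n^2-n+1$ permutation matrices in the decomposition, at the cost of importing Hall's theorem as a combinatorial black box; the paper's cycle-perturbation mechanism is less elementary on its face but is precisely the device the author reuses and extends to characterize the extreme points of the general polytopes $\mathcal{U}(R_{min}^{max},C_{min}^{max})$ and $\mathcal{U}^k(R_{min}^{max},C_{min}^{max})$, where row and column sums are only constrained to lie in intervals and a peeling induction would be considerably harder to adapt.
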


There are many proofs available for this theorem, referred to as the Birkhoff - von Neumann theorem; we only refer here to the original proofs \cite{Birkhoff:1946, Neumann:1953}. This theorem has proved useful to establish convergence for algorithms developed to solve the assignment problem (see for example \cite{Kosowsky:1994, Koehl:2021b}.

The equations \ref{eqn:constraintsA} refer to a very specific balanced assignment problem.
There are, however many more types of assignment problems, usually referred to as unbalanced problems.
They consider a number of agents that differ from the number of tasks (using the example and terminology from above).
The most common formulation assigns a single agent to one task, leaving some agents and/or some task unmatched.
It is referred to in the literature as the $k$-cardinality assignment problem (\cite{Dell:1997}).
Some other formulations allow for multiple jobs to be assigned to the same agent to compensate
for the imbalance (when the number of tasks is bigger than the number of agents), with possibly additional constraints
such as each agent is allocated at least one task.
Transportation matrices for such problems belong to a more general class of transportation polytopes than the doubly stochastic matrices and doubly substochastic matrices.
In this paper, we propose to characterize those transportation polytopes.

We define these more general assignment problems as follows. The cost of transport between $S_1$ and $S_2$ is still encoded 
as a non-negative matrix $T=[T(i,j)]$ with $i\in\{1\ldots,n\}$ and $j\in\{1\ldots,m\}$.
The general assignment problem is formulated as finding a matrix $G$ of correspondence between points in $S_1$ and points in $S_2$ that minimizes the total transport cost $U = \sum_{i=1}^{n} \sum_{j=1}^{m} T(i,j) G(i,j)$
whose values $G(i,j)$ satisfy the following constraints
\begin{subequations}
\label{eqn:constraintsG}
\begin{align}
& \forall (i,j), \ \ \quad G(i,j) \in \{0,1\}, \label{eqn:constraintsG1} \\
 &  \forall i, \ \ \quad r_i \le \sum\limits_j G(i,j) \le R_i ,\label{eqn:constraintsG2}\\
 &  \forall j, \ \ \quad c_j \le \sum\limits_i G(i,j) \le C_j,  \label{eqn:constraintsG3}
 \end{align}
  \end{subequations}
where $r_i$ and $R_i$ are given positive integers satisfying $0 \le r_i \le R_j$ and similarly $c_j$ and $C_j$ are given positive
integers satisfying $0 \le c_j \le C_j$.
A relaxed version of these constraints is to replace equation \ref{eqn:constraintsG1} with $0 \le G(i,j) \le 1$, i.e. allowing $G(i,j)$ to be fractional.
Finally, a possible additional constraint is to preset the total number of assignments between $S_1$ and $S_2$ to a given integer value $k$, i.e.
$\displaystyle \sum_{i=1}^{n} \sum_{j=1}^{m} G(i,j) = k$.
Possible solutions of the relaxed general assignment problem belong to sets of matrices that are defined below.

\begin{definition}
Let $R_{min} = (r(1),\ldots, r(n))$, $R_{max} = (R(1),\ldots, R(n))$, $C_{min} = (c(1),\ldots, c(m))$, and $C_{max} = (C(1),\ldots, C(m))$ be four
non-negative integral vectors satisfying
\begin{eqnarray*}
& \forall i, \ \ \quad 0 \le r(i) \le R(i),\\
& \forall j, \ \ \quad 0 \le c(j) \le C(j).
\end{eqnarray*}
Let $A$ be a non-negative matrix of size $n\times m$ and let us denote the row sum vector of $A$ as $RA$ and the column sum vector of $A$ as $CA$.
Let us also define $\sigma(A)$ to be the sum of all elements of $A$, i.e. $\sigma(A) = \sum_{i=1}^{n} \sum_{j=1}^{m} A(i,j)$.
The transportation polytope $\mathcal{U}(R_{min}^{max},C_{min}^{max})$ is the set of $n\times m$ matrices $A$ that satisfy
\begin{eqnarray*}
& \forall (i,j), \ \ \quad 0 \le A(i,j) \le 1,\\
 &  \forall i, \ \ \quad r(i) \le RA(i) \le R(i), \\
 &  \forall j, \ \ \quad c(j) \le CA(j) \le C(j) .
\end{eqnarray*}
The transportation polytope $\mathcal{U}^k(R_{min}^{max},C_{min}^{max})$ is the set of $n\times m$ matrices $A$ that satisfy
\begin{eqnarray*}
\mathcal{U}^k(R_{min}^{max},C_{min}^{max}) = \left \{ A \in \mathcal{U}(R_{min}^{max},C_{min}^{max}) | \quad \sigma(A) = k \right \}.
\end{eqnarray*}
\end{definition}
Denote $\mathcal{P}(R_{min}^{max},C_{min}^{max})$ the set of all matrices in $\mathcal{U}(R_{min}^{max},C_{min}^{max})$ whose entries are either 0 or 1 with a similar definition for $\mathcal{P}^k(R_{min}^{max},C_{min}^{max})$ with respect to $\mathcal{U}^k(R_{min}^{max},C_{min}^{max})$.
We now state our main result.

\begin{theorem} 
\label{th:th2}
The transportation polytopes $\mathcal{U}(R_{min}^{max},C_{min}^{max})$ and $\mathcal{U}^k(R_{min}^{max},C_{min}^{max})$ satisfy the following properties:
\begin{itemize}
\item[a)] $\mathcal{U}(R_{min}^{max},C_{min}^{max})$ is the convex hull of all matrices in $\mathcal{P}(R_{min}^{max},C_{min}^{max})$,
\item[b)] $\mathcal{U}^k(R_{min}^{max},C_{min}^{max})$ is the convex hull of all matrices in $\mathcal{P}^k(R_{min}^{max},C_{min}^{max})$.
\end{itemize}
\end{theorem}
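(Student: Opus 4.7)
The plan is to identify the extreme points of the polytope $\mathcal{U}(R_{min}^{max},C_{min}^{max})$ with $\mathcal{P}(R_{min}^{max},C_{min}^{max})$, and likewise for the $k$-version, and then invoke the fact that a bounded convex polytope equals the convex hull of its extreme points. The easy half is that any $0/1$ matrix in $\mathcal{P}$ is extreme in $\mathcal{U}$: every entry already sits at a box bound, so any nonzero perturbation would violate one of them. The real content is the converse: an $A \in \mathcal{U}$ with at least one fractional entry cannot be extreme.

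For this I form the bipartite graph $G_A$ on the row and column indices whose edges are the fractional entries $F=\{(i,j):0<A(i,j)<1\}$ and exhibit a nonzero perturbation $E$, supported on $F$, with $A\pm\epsilon E\in\mathcal{U}$ for small $\epsilon>0$. If $G_A$ contains a cycle I take $E$ alternating $\pm 1$ around it; bipartite cycles have even length, so all row and column sums are preserved. If $G_A$ is a forest, the crucial observation is that every leaf of every tree of $G_A$ must be \emph{non-tight}: a leaf is incident to exactly one fractional entry, so its row/column sum is that fractional value plus a sum of $0$'s and $1$'s, hence a non-integer that sits strictly inside the integer bounds $[r(i),R(i)]$ or $[c(j),C(j)]$. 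Picking any tree with at least one edge and two of its leaves $u,w$, the perturbation $E$ alternating $\pm 1$ along the unique $u$--$w$ path cancels at each internal vertex (two path edges of opposite sign) and is absorbed by the slack at the two leaves, so $A\pm\epsilon E\in\mathcal{U}$.

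Part (b) goes the same way but the perturbation must also satisfy $\sigma(E)=0$ to keep $\sigma(A\pm\epsilon E)=k$. Along a cycle this is automatic; along a path between two leaves on the same side of the bipartition (even length) it is automatic as well. The subtle case is a forest in which every tree is an odd-length path with leaves on opposite sides: if at least two such trees exist I superpose their canonical perturbations with opposite signs to cancel the $\pm\epsilon$ contribution to $\sigma$, and if only one such tree $v_0\!-\!v_1\!-\!\cdots\!-\!v_\ell$ exists, integrality of $k=\sigma(A)$ together with the identity $\alpha_i+\alpha_{i+1}\in\mathbb{Z}$ at each tight internal vertex forces $\sum_i\alpha_i\equiv\alpha_1\pmod{\mathbb{Z}}$ (for odd $\ell$), contradicting $\alpha_1\in(0,1)$ unless some internal vertex is non-tight; that extra non-tight vertex supplies the additional degree of freedom needed to make $\sigma(E)=0$. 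The main obstacle is precisely this parity case analysis in (b). A cleaner but less self-contained alternative is to realize both $\mathcal{U}$ and $\mathcal{U}^k$ as flow polytopes on a source--row--column--sink network (with the return arc pinned to $[k,k]$ in the second case); the constraint matrix is then the incidence matrix of a directed graph, hence totally unimodular, and integer capacities $r,R,c,C,k$ immediately yield $0/1$ extreme flows.
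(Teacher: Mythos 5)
Your proposal is correct and follows essentially the same route as the paper: characterize the extreme points as exactly the $0/1$ matrices, then show any fractional matrix is non-extreme via an alternating $\pm\epsilon$ perturbation along a cycle or a leaf-to-leaf path in the bipartite graph of fractional entries, absorbing the endpoint defect in the slack of lines whose sum is fractional, and for part (b) using the integrality of $k = \sigma(A)$ to produce either a second odd path to superpose or a non-tight interior vertex (this is the paper's ``two mutable columns'' claim). Your forest/leaf packaging and the closing total-unimodularity aside are tidier than the paper's path-walking case analysis, but the substance is the same.
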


\paragraph{Remarks:}
\begin{itemize}
\item[i)] If $R_{min}=R_{max}=1_n$, $C_{min}=C_{max}=1_m$ (where $1_n$ and $1_m$ are vectors of one of size $n$ and $m$, respectively), $n=m=k$, $\mathcal{U}^k(R_{min}^{max},C_{min}^{max})= \Omega_n$ and $\mathcal{P}^k(R_{min}^{max},C_{min}^{max})= P_n$ and theorem \ref{th:th2} is then equivalent to the Birkhoff-Von Neuman theorem for doubly stochastic matrices, theorem \ref{th:th1}.
\item[ii)] If $R_{min}=0_n$, $C_{min}=0_m$, $R_{max}=1_n$, $C_{max}=1_m$, $\mathcal{U}(R_{min}^{max},C_{min}^{max})$ is the set of doubly substochastic matrices, $\mathcal{P}(R_{min}^{max},C_{min}^{max})$ is the set of subpermutation matrices; a specific version of theorem \ref{th:th2} was established (see for example \cite{Mirsky:1959, Cihak:1970}).
\item[iii)] Similar to case ii), if $R_{min}=0_n$, $C_{min}=0_m$, $R_{max}=1_n$, $C_{max}=1_m$, $\mathcal{U}^k(R_{min}^{max},C_{min}^{max})$ is the set of doubly substochastic matrices with total sum $k$, and $\mathcal{P}(R_{min}^{max},C_{min}^{max})$ is the set of subpermutation matrices of rank $k$; a specific version of theorem \ref{th:th2} was established by Mendelsohn and Dulmage for square matrices (\cite{Mendelsohn:1958}), and later by Brualdi and Lee for rectangular matrices (\cite{Brualdi:1978}).
\end{itemize}

\section{A simple proof of Birkhoff - von Neumann theorem}

As mentioned above, there are many proofs available for the Birkhoff - von Neumann theorem, some of which belong now  to textbooks. 
Here we describe a simple proof.
It is not original, but will serve as the basis for elements of the proof of theorem \ref{th:th2}.

First, we note that it is straightforward to show that $\Omega_n$ is a non-empty compact convex set in $\mathbb{R}^{n\times m}$ 
and that any matrix in $P_n$ is an extreme point of $\Omega_n$. Since any non-empty compact convex set is the convex hull of its extreme points,
to finish the proof, we only need to show that any matrix $A \in \Omega_n\smallsetminus P_n$  is not an extreme point of $\Omega_n$.

We prove first the following claim.
\begin{claim}
Let $A \in \Omega_n$. If a row or column of $A$ contains a fractional value, then it contains at least two.
\label{claim:claim1}
\end{claim}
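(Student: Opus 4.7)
The plan is a direct parity/integrality argument on row (respectively column) sums. Since $A \in \Omega_n$ is doubly stochastic, each row sum and each column sum equals $1$, and every entry lies in $[0,1]$.

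First I would suppose, for contradiction, that some row $i$ contains exactly one fractional entry $A(i,j_0)$, meaning a value in $(0,1)$ that is not an integer. Every other entry $A(i,j)$ with $j \neq j_0$ is then by hypothesis a non-fractional value in $[0,1]$, hence equal to $0$ or $1$. Consequently the partial sum $\sum_{j \neq j_0} A(i,j)$ is a non-negative integer, call it $s$. The row constraint gives
\begin{equation*}
A(i,j_0) = 1 - s,
\end{equation*}
which is an integer, contradicting the assumption that $A(i,j_0)$ is fractional.

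The same argument applied to a column (using the column-sum constraint $\sum_i A(i,j) = 1$ in place of the row-sum constraint) shows that no column can contain exactly one fractional entry. Hence whenever a row or column of $A$ has a fractional entry it must have at least two, establishing the claim.

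There is no serious obstacle here; the only point worth stating carefully is that the entries of a doubly stochastic matrix lie in $[0,1]$, so any non-fractional entry is forced to be $0$ or $1$, which is what makes the partial sum an integer and drives the contradiction.
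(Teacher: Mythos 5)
Your argument is correct and is essentially the same as the paper's: both rely on the fact that the row (or column) sum equals the integer $1$, so a single fractional entry would force that sum to be non-integral. You simply phrase it as an explicit proof by contradiction and spell out why the non-fractional entries sum to an integer, whereas the paper asserts the existence of the second fractional entry directly.
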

\begin{proof}
Let $A(i,j)$ be a fractional value of $A$. Since
\begin{eqnarray*}
\sum_k A(i,k) = 1,
\end{eqnarray*}
there exists $j_2 \in [1,n]\smallsetminus \{j\}$ such that $A(i,j_2)$ is fractional. Similarly, since
\begin{eqnarray*}
\sum_k A(k,j) = 1,
\end{eqnarray*}
there exists $i_2 \in [1,n]\smallsetminus \{i\}$ such that $A(i_2,j)$ is fractional.
\end{proof}

Let now $A$ be a matrix in $\Omega_n\smallsetminus P_n$. 
There exists a pair $(i_1,j_1)$ such that $A(i_1,j_1)$ is fractional.
Based on claim \ref{claim:claim1}, there exists $j_2 \in [1,n]$ with $j_2 \ne j_1$ such that $A(i_1,j_2)$ is non integral.
Similarly, we can find $i_2 \in [1,n]$ with $i_2 \ne i_1$ such that $A(i_2,j_2)$ is fractional.
We can continue in this manner, leading to a path $\left( (i_1, j_1), (i_1 j_2), \ldots \right)$ with fractional values in $A$. 
As $n$ and $m$ are finite, we will ultimately reach a pair that we have already visited. This means that we have identified a loop $L$ among all edges between $S_1$ and $S_2$; the cardinality of this cycle is even (bipartite graph). We write this cycle as
\begin{eqnarray*}
L = \{ (a_1, b_1), (a_2, b_2), \ldots, (a_{2M}, b_{2M})\},
\end{eqnarray*}
where $2M = |L|$. 
We define the matrix $N$
\begin{eqnarray}
\begin{cases}
	N(i,j) &= 0 \quad (i,j) \notin C \nonumber \\
	N(a_{2k},b_{2k}) &= 1 \quad k\in \{1,\ldots, M\} \nonumber \\
	N(a_{2k-1},b_{2k-1}) &= -1 \quad k\in \{1,\ldots, M\}.
\end{cases}
\label{eqn:N}
\end{eqnarray}
Let us now define
\begin{eqnarray*}
\epsilon_{max} = \min \{ A(a_1,b_1), \ldots, A(a_{2M}, b_{2M}), 1 - A(a_1,b_1), \ldots,1-A(a_{2M}, b_{2M}) \}.
\end{eqnarray*}
As all elements in the loop $L$ are fractional, $0 < \epsilon_{max} < 1$. For $\epsilon \in (0, \epsilon_{max}]$, we define $E_1 = A + \epsilon N$ and $E_2 = A - \epsilon N$.
As two consecutive pairs in $L$ lead to the addition and subtraction of the same quantity $\epsilon$ on one row or one column of $A$, it is easy to verify 
that $E_1$ and $E_2$ are doubly stochastic and therefore belong to $\Omega_n$.
Since $A = \frac{1}{2}(E_1+E_2)$, $A$ is not an extreme point of $\Omega_n$.

\section{The polytopes $\mathcal{U}(R,C)$ and $\mathcal{U}^k(R,C)$}

Let us start with the simple case for which $R_{min}=R_{max}=R$ and $C_{min}=C_{max}=C$. We rewrite $\mathcal{U}(R_{min}^{max},C_{min}^{max})$ and $\mathcal{U}^k(R_{min}^{max},C_{min}^{max})$ as simply $\mathcal{U}(R,C)$ and $\mathcal{U}^k(R,C)$. Those polytopes have been studied extensively (see for example \cite{Jurkat:1967, Brualdi:1980, Rothblum:1989, Brualdi:2005, Fonseca:2009, Brualdi:2006, Cavenagh:2013, Chen:2016}).
We prove first the following lemma.
\begin{lemma}
The polytope $\mathcal{U}(R,C)$ is non-empty if and only if $\sum_{i=1}^n R(i) = \sum_{j=1}^m C(j)$. In addition,
$\mathcal{U}^k(R,C)$ is non empty if and only if $k = \sum_{i=1}^n R(i)$, in which case $\mathcal{U}^k(R,C) = \mathcal{U}(R,C)$.
\end{lemma}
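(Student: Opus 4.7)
The plan is to treat necessity and sufficiency of the first assertion separately, and then deduce the second assertion from a simple bookkeeping observation.

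For necessity, if $A\in\mathcal{U}(R,C)$ then the row-sum constraints give $\sigma(A)=\sum_{i}R(i)$ and the column-sum constraints give $\sigma(A)=\sum_{j}C(j)$, so $\sum_iR(i)=\sum_jC(j)$ follows immediately by double counting.

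For sufficiency, assuming $S:=\sum_iR(i)=\sum_jC(j)$ (together with the implicit feasibility bounds $R(i)\le m$ and $C(j)\le n$, without which no row or column of $[0,1]$-valued entries can attain the prescribed sums), I would construct an explicit element of $\mathcal{U}(R,C)$ via a northwest-corner-style greedy algorithm. Start with residual capacities $\rho_i:=R(i)$, $\gamma_j:=C(j)$, and initialize $A=0$. Repeatedly pick the lexicographically first cell $(i,j)$ with $\rho_i>0$ and $\gamma_j>0$, set $A(i,j):=\min(\rho_i,\gamma_j,1)$, and decrement $\rho_i$ and $\gamma_j$ by that amount. The invariant $\sum_i\rho_i=\sum_j\gamma_j$ is preserved, each step either zeros out a residual or saturates a cell to $1$, and the procedure terminates with all residuals equal to zero, yielding a matrix in $\mathcal{U}(R,C)$. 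An alternative, slightly slicker route is induction on $n+m$: peel off one row or column at a time, using $R(1)\le m$ to distribute $R(1)$ units across $(A(1,j))_j$ while respecting $A(1,j)\le 1$ and decreasing each $C(j)$ to a feasible smaller instance.

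For the $\mathcal{U}^k(R,C)$ assertion, the decisive observation is that any $A\in\mathcal{U}(R,C)$ automatically satisfies
\begin{equation*}
\sigma(A)=\sum_{i=1}^n\sum_{j=1}^m A(i,j)=\sum_{i=1}^n R(i),
\end{equation*}
since with $R_{\min}=R_{\max}=R$ the row sums are fixed exactly at $R(i)$. Consequently, the extra constraint $\sigma(A)=k$ is vacuous when $k=\sum_iR(i)$, whence $\mathcal{U}^k(R,C)=\mathcal{U}(R,C)$, and inconsistent otherwise, whence $\mathcal{U}^k(R,C)=\emptyset$.

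The main obstacle is the sufficiency construction: one must produce a matrix that simultaneously meets the row sums, the column sums, and the $[0,1]$ bound. The northwest-corner rule handles this cleanly because the $\min(\rho_i,\gamma_j,1)$ choice enforces the unit upper bound automatically, while the conservation law $\sum_i\rho_i=\sum_j\gamma_j$ guarantees that the algorithm cannot stall with unallocated residual on only one side.
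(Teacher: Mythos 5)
Your necessity argument and your treatment of $\mathcal{U}^k(R,C)$ coincide with the paper's: double counting gives $\sigma(A)=\sum_i R(i)=\sum_j C(j)$ for any $A\in\mathcal{U}(R,C)$, and since every member of $\mathcal{U}(R,C)$ automatically has total sum $\sum_i R(i)$, the extra constraint $\sigma(A)=k$ is either vacuous or inconsistent. No issues there.

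The gap is in the sufficiency direction, specifically in the assertion that your northwest-corner procedure ``terminates with all residuals equal to zero.'' The conservation law $\sum_i\rho_i=\sum_j\gamma_j$ only guarantees that leftover supply and leftover demand stay balanced; it does not prevent the algorithm from stalling in a state where every cell joining a row with $\rho_i>0$ to a column with $\gamma_j>0$ is already saturated at $1$. Concretely, take $n=m=3$ and $R=C=(3,3,1)$: then $\sum_i R(i)=\sum_j C(j)=7$ and your side conditions $R(i)\le m$, $C(j)\le n$ hold, yet $\mathcal{U}(R,C)=\emptyset$, because rows $1$ and $2$ each consist of three entries bounded by $1$ summing to $3$ and hence are all ones, forcing every column sum to be at least $2$ and contradicting $C(3)=1$. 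So no construction can succeed on this instance, your greedy necessarily stalls, and the inductive ``peel off a row'' variant cannot keep the reduced instance feasible either. To be fair, the paper's own one-line sufficiency argument, the explicit matrix $A=\frac{1}{S}RC^{T}$, fails for the same underlying reason: on this instance it has entry $R(1)C(1)/S=9/7>1$, violating the bound $A(i,j)\le 1$ that is part of the paper's definition of $\mathcal{U}(R,C)$. Under that definition, equality of the totals is genuinely not sufficient for non-emptiness; one needs subset conditions of Gale--Ryser/max-flow type, e.g.\ $\sum_{i\in I}R(i)\le\sum_{j=1}^{m}\min\bigl(|I|,C(j)\bigr)$ for all $I\subseteq[n]$. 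The lemma (and both constructions) are correct only if the upper bound $A(i,j)\le 1$ is dropped from the definition.
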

\begin{proof}
Let $A$ be a matrix in $\mathcal{U}(R,C)$. Then,
\begin{eqnarray*}
\sigma(A) &=& \sum_{i=1}^{n} \sum_{j=1}^{m} A(i,j) = \sum_{i=1}^{n} \left( \sum_{j=1}^{m} A(i,j) \right) = \sum_{i=1}^{n} R(i) \\
&=&  \sum_{j=1}^{m} \left( \sum_{i=1}^{n} A(i,j) \right) = \sum_{j=1}^{m} C(j).
\end{eqnarray*}
Therefore, $\sum_{i=1}^n R(i) = \sum_{j=1}^m C(j)$. Conversely, if $S=\sum_{i=1}^n R(i) = \sum_{j=1}^m C(j)$, it is straightforward
to build a matrix $A$ that belongs to $\mathcal{U}(R,C)$ (for example $A=\frac{1}{S} RC^T$).

Let $A$ be a matrix in $\mathcal{U}^k(R,C)$. By definition, $k=\sigma(A)$. Since $\mathcal{U}^k(R,C) \subset \mathcal{U}(R,C)$, we get $k=\sum_{i=1}^n R(i)$.
In addition, let $A$ be a matrix in $\mathcal{U}(R,C)$. Then $\sigma(A)=\sum_{i=1}^{n} R(i) = k$, therefore $A \in \mathcal{U}^k(R,C)$. 
\end{proof}

In the case considered here, theorem \ref{th:th2} is reduced to the following lemma:
\begin{lemma}
\label{lem:lem1}
The transportation polytopes $\mathcal{U}(R,C)$ and $\mathcal{U}^k(R,C)$ satisfy the following properties:
\begin{itemize}
\item[a)] $\mathcal{U}(R,C)$ is the convex hull of all matrices in $\mathcal{P}(R,C)$,
\item[b)] $\mathcal{U}^k(R,C)$ is the convex hull of all matrices in $\mathcal{P}^k(R,C)$.
\end{itemize}
\end{lemma}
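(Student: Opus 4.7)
The plan is to mimic the proof of the Birkhoff--von Neumann theorem given in Section 2 almost verbatim, replacing $\Omega_n$ by $\mathcal{U}(R,C)$ and $P_n$ by $\mathcal{P}(R,C)$. Under the assumption $\sum_i R(i) = \sum_j C(j)$ granted by the preceding lemma, $\mathcal{U}(R,C)$ is a non-empty, closed, bounded, convex subset of $\mathbb{R}^{n \times m}$, and therefore equals the convex hull of its extreme points. The easy half is to check that every $A \in \mathcal{P}(R,C)$ is extreme in $\mathcal{U}(R,C)$: if such a $0/1$ matrix were a strict midpoint of two matrices in $\mathcal{U}(R,C)$, the entry-wise bound $0 \le (\cdot) \le 1$ would force both to agree with $A$ at every position.

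The substantive half is to show that any $A \in \mathcal{U}(R,C) \setminus \mathcal{P}(R,C)$ is not extreme. I would first generalize Claim~\ref{claim:claim1} to $\mathcal{U}(R,C)$: if a row or column of $A$ contains a fractional value, then it contains at least two. The argument transfers verbatim because each row sum $R(i)$ and each column sum $C(j)$ is an integer, so a single fractional summand cannot produce an integer total. Starting from any fractional entry, I then perform the same ``alternate row / alternate column'' walk through fractional entries used in Section~2; finiteness of the matrix forces a repeated vertex, from which I extract an even-length bipartite cycle $L$ of fractional positions. Defining the matrix $N$ with alternating $\pm 1$ along $L$ and $0$ elsewhere, exactly as in equation~(\ref{eqn:N}), and setting $\epsilon_{max} = \min\{A(a,b),\ 1 - A(a,b) : (a,b) \in L\} > 0$, I obtain candidate matrices $E_1 = A + \epsilon N$ and $E_2 = A - \epsilon N$ for $\epsilon \in (0,\epsilon_{max}]$.

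The step that requires care---and is essentially the only place where anything specific must be checked---is that $E_1, E_2 \in \mathcal{U}(R,C)$. The entry-wise bound $0 \le E_i(a,b) \le 1$ is immediate from the choice of $\epsilon_{max}$, and the row and column sums of $A$ are preserved by $\pm \epsilon N$ because consecutive edges in the bipartite cycle share either a row or a column, so each row of $N$ and each column of $N$ sums to zero. This argument is identical to the Birkhoff--von Neumann one and crucially does not use $R(i) = C(j) = 1$; only the integrality of the row and column sums is needed, and that enters solely through the generalization of Claim~\ref{claim:claim1}. Writing $A = \tfrac12 (E_1 + E_2)$ then shows $A$ is not extreme, finishing part~(a). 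Part~(b) requires no further work: by the preceding lemma, $\mathcal{U}^k(R,C)$ is either empty or equal to $\mathcal{U}(R,C)$ (for $k = \sum_i R(i)$), with $\mathcal{P}^k(R,C) = \mathcal{P}(R,C)$ in the latter case, so (b) is a restatement of (a). I do not anticipate a real obstacle at this stage; the point of isolating the $R_{min}=R_{max}$, $C_{min}=C_{max}$ case is precisely that it reuses the classical cycle argument unchanged, whereas the general case will presumably need extra ideas to handle rows or columns whose sums can lie anywhere in $[r(i),R(i)]$ or $[c(j),C(j)]$.
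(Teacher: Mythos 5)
Your proof of part~(a) is correct and follows the paper's route exactly: the paper likewise replaces Claim~\ref{claim:claim1} by the observation that integrality of $R(i)$ and $C(j)$ forces any fractional line to contain at least two fractional entries, and then reuses the even-cycle perturbation $A \pm \epsilon N$ verbatim. For part~(b) you take a slightly different and arguably cleaner path: you invoke the preceding non-emptiness lemma to conclude that $\mathcal{U}^k(R,C)$ is either empty or coincides with $\mathcal{U}(R,C)$ (with $\mathcal{P}^k(R,C)=\mathcal{P}(R,C)$), so (b) collapses to (a). The paper instead re-runs the cycle argument and checks that the alternating $\pm 1$ pattern on an even cycle gives $\sigma(N)=0$, hence $\sigma(E_1)=\sigma(E_2)=k$. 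Both are valid one-line finishes; your reduction avoids any computation, while the paper's $\sigma(N)=0$ observation is the one that foreshadows the genuine difficulty in the later $\mathcal{U}^k(R_{min}^{max},C_{min}^{max})$ case, where odd cycles can occur and the total sum is no longer automatically preserved.
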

The proof of property $a)$ of lemma \ref{lem:lem1} is similar to the simple proof provided above for the Birkhoff - von Neumann theorem, once we replace claim \ref{claim:claim1} with the following claim:
\begin{claim}
Let $A \in \mathcal{U}(R,C)$. If a row or column of $A$ contains a fractional value, then it contains at least two.
\label{claim:claim2}
\end{claim}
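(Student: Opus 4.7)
The plan is to mimic the proof of Claim \ref{claim:claim1}, replacing the row/column sum constraint ``equals $1$'' with ``equals $R(i)$'' (resp.\ $C(j)$), and exploiting the fact that $R(i)$ and $C(j)$ are integers by hypothesis. So the key observation to exploit is an integrality / parity argument: an integer cannot be written as a single fractional number plus a sum of integers.

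More concretely, suppose $A(i,j)$ is fractional. First I would look at row $i$, whose entries sum to $R(i)$, and rewrite
\begin{eqnarray*}
\sum_{k \ne j} A(i,k) \;=\; R(i) - A(i,j).
\end{eqnarray*}
Since $R(i) \in \mathbb{Z}$ and $A(i,j)$ is fractional, the right-hand side is fractional. But if every $A(i,k)$ with $k \ne j$ were an integer (in fact, in $\{0,1\}$), then the left-hand side would be an integer, a contradiction. Hence at least one $A(i,k)$ with $k \ne j$ is fractional, giving a second fractional entry in row $i$. The same argument applied to column $j$, using $\sum_{\ell \ne i} A(\ell,j) = C(j) - A(i,j)$ and $C(j) \in \mathbb{Z}$, produces a second fractional entry in column $j$.

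Once Claim \ref{claim:claim2} is in hand, the proof of property (a) of Lemma \ref{lem:lem1} is essentially copied from the Birkhoff--von Neumann argument: starting from a fractional entry, one builds an alternating path of fractional entries in the bipartite support graph, which closes into an even cycle $L$, and then one uses exactly the same matrix $N$ of $\pm 1$'s along $L$ and the same $\epsilon_{\max}$ to write $A = \tfrac{1}{2}(E_1 + E_2)$ with $E_1, E_2 \in \mathcal{U}(R,C)$. The row/column sums are preserved because the signs alternate around $L$, so the perturbation adds zero on each row and each column touched by the cycle.

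The only step I anticipate needing some care is part (b): the perturbation $\pm \epsilon N$ used in (a) preserves row and column sums, hence preserves $\sigma(A) = k$ as well, so the same argument shows that any fractional $A \in \mathcal{U}^k(R,C)$ lies strictly between two other points of $\mathcal{U}^k(R,C)$. There is no real obstacle; the whole argument is essentially a transcription of the $\Omega_n$ proof with ``$1$'' replaced by ``$R(i)$'' and ``$C(j)$'', and the integrality of these row/column totals is what lets the parity/integrality step go through unchanged.
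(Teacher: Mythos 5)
Your proof of Claim~\ref{claim:claim2} is correct and takes essentially the same route as the paper's: both exploit the integrality of $R(i)$ and $C(j)$ to conclude that a lone fractional entry would force a fractional line sum, with your version merely spelling out the contradiction ($\sum_{k\ne j}A(i,k)=R(i)-A(i,j)$ would be both integral and fractional) more explicitly than the paper does. Your surrounding remarks on how the claim feeds into Lemma~\ref{lem:lem1}, including the preservation of $\sigma(A)=k$ by the alternating cycle perturbation, also match the paper's treatment.
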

\begin{proof}
Let $A(i,j)$ be a fractional value of $A$. Since
\begin{eqnarray*}
\sum_k A(i,k) = R(i)
\end{eqnarray*}
and $R(i)$ is a non-negative integer, necessarily $R(i) \ne 0$ and there exists $j_2 \in [1,n]\smallsetminus \{j\}$ such that $A(i,j_2)$ is fractional. Similarly, since
\begin{eqnarray*}
\sum_k A(k,j) = C(j)
\end{eqnarray*}
and $C(j)$ is a non-negative integer, necessarily $C(j) \ne 0$ there exists $i_2 \in [1,n]\smallsetminus \{i\}$ such that $A(i_2,j)$ is fractional.
\end{proof}

The proof of part $b)$ requires in addition that the matrices $E_1$ and $E_2$ built from the matrix $A$ in $\mathcal{U}^k(R,C)$ also belong to $\mathcal{U}^k(R,C)$. This is a direct consequence of the fact that the loop identified in the matrix $A$ has en even number of elements. As such  the matrix $N$ defined in equation \ref{eqn:N} satisfies $\sigma(N)=0$, and therefore $\sigma(E_1)=\sigma(E_2)=\sigma(A)=k$, i.e. $E_1$ and $E_2$ belong to $\mathcal{U}^k(R,C)$.

\section{The polytope $\mathcal{U}(R_{min}^{max},C_{min}^{max})$}

We start with the following lemma:
\begin{lemma}
 $\mathcal{U}(R_{min}^{max},C_{min}^{max})$ is convex.
 \label{lem:lem2}
 \end{lemma}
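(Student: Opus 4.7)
The plan is to prove convexity directly from the definition, since $\mathcal{U}(R_{min}^{max},C_{min}^{max})$ is cut out by a finite collection of affine inequalities and is therefore automatically a convex polyhedron. Concretely, I would pick two matrices $A, B \in \mathcal{U}(R_{min}^{max},C_{min}^{max})$ and a scalar $\lambda \in [0,1]$, set $M = \lambda A + (1-\lambda) B$, and check that $M$ satisfies each defining constraint.

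First I would verify the entry-wise bounds. For every $(i,j)$, $M(i,j) = \lambda A(i,j) + (1-\lambda) B(i,j)$, which is a convex combination of numbers in $[0,1]$, hence lies in $[0,1]$. Next I would check the row and column sum bounds using linearity of the row and column sum operators: for any row $i$,
\begin{equation*}
RM(i) = \sum_{j=1}^{m} M(i,j) = \lambda RA(i) + (1-\lambda) RB(i),
\end{equation*}
and since $r(i) \le RA(i), RB(i) \le R(i)$, the same inequalities are preserved by the convex combination. The column sum case is identical, yielding $c(j) \le CM(j) \le C(j)$ for all $j$. Hence $M \in \mathcal{U}(R_{min}^{max},C_{min}^{max})$.

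There is no real obstacle here: the lemma amounts to the observation that an intersection of half-spaces is convex, and I expect the proof to be no more than a few lines of direct verification. The only thing to be careful about is to invoke linearity of the row and column sum maps explicitly, since these are the ingredients that will be reused later when building the matrix $N$ and the perturbations $E_1, E_2$ in the proof of the main theorem.
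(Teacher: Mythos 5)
Your proof is correct and follows essentially the same route as the paper: take a convex combination $\lambda A + (1-\lambda)B$, verify the entry-wise bounds from convexity of $[0,1]$, and verify the row and column sum bounds from linearity of the sum operators together with convexity of the intervals $[r(i),R(i)]$ and $[c(j),C(j)]$. Nothing is missing.
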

 \begin{proof}
 The proof of lemma \ref{lem:lem2} is relatively straightforward. We provide it here for sake of completeness.
Let $A$ and $B$ be two matrices belonging to $\mathcal{U}(R_{min}^{max},C_{min}^{max})$, $\alpha$ a real number in $[0,1]$, and $E = \alpha A + (1-\alpha) B$. 
We have:
\begin{eqnarray*}
E(i,j) = \alpha A(i,j) + (1-\alpha) B(i,j).
\end{eqnarray*}
As both $A(i,j)$ and $B(i,j)$ belong to $[0,1]$ and $[0,1]$ is convex, $E(i,j) \in [0,1]$. Also, for $i \in [1,n]$,
\begin{eqnarray*}
 \sum_{j=1}^{m} E(i,j) = \alpha  \sum_{j=1}^{m} A(i,j) + (1-\alpha) \sum_{j=1}^{n} B(i,j).
 \end{eqnarray*}
 As $\sum_{j=1}^{m} A(i,j)$ and $\sum_{j=1}^{m} B(i,j)$ belong to $[r(i), R(i)]$ and this interval is convex,  $\sum_{j=1}^{n} E(i,j) \in[r(i), R(i)]$. This is true for all $i \in[1, n]$.
 
 Similarly, 
 \begin{eqnarray*}
 \sum_{i=1}^{n} E(i,j) = \alpha  \sum_{i=1}^{n} A(i,j) + (1-\alpha) \sum_{1=1}^{n} B(i,j).
 \end{eqnarray*}
As $\sum_{i=1}^{n} A(i,j)$ and $\sum_{i=1}^{n} B(i,j)$ belong to $[c(j),C(j)]$ and this interval is convex,  $\sum_{i=1}^{n} E(i,j) \in [c(j),C(j)]$. This is true for all $j \in[1, m]$.
  
 Therefore, $\alpha A + (1-\alpha) B$ belongs to $\mathcal{U}(R_{min}^{max},C_{min}^{max})$ and this set is convex.
 \end{proof}
 
\begin{lemma}
 If $A$ belongs to $\mathcal{P}(R_{min}^{max},C_{min}^{max})$, then $A$ is an extreme point of $\mathcal{U}(R_{min}^{max},C_{min}^{max})$.
 \label{lem:lem3}
 \end{lemma}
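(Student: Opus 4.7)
The plan is to show that $A$ cannot be written as a nontrivial convex combination of two distinct matrices in $\mathcal{U}(R_{min}^{max},C_{min}^{max})$, by exploiting the box constraint $0\le E(i,j)\le 1$ entry-by-entry. The row and column sum constraints play no role here; only the fact that $A$ lies at the corner of the unit cube in each coordinate is needed.

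First I would set up the standard extreme-point argument: suppose for contradiction that there exist matrices $B,C \in \mathcal{U}(R_{min}^{max},C_{min}^{max})$ with $B\neq C$ and a scalar $\alpha \in (0,1)$ such that $A = \alpha B + (1-\alpha) C$. Then for every index pair $(i,j)$,
\begin{eqnarray*}
A(i,j) = \alpha B(i,j) + (1-\alpha) C(i,j),
\end{eqnarray*}
with $B(i,j), C(i,j) \in [0,1]$ by definition of $\mathcal{U}(R_{min}^{max},C_{min}^{max})$.

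Next I would do the entry-wise case analysis using $A(i,j)\in\{0,1\}$. If $A(i,j)=0$, then $\alpha B(i,j) + (1-\alpha) C(i,j) = 0$ together with $\alpha,1-\alpha>0$ and $B(i,j),C(i,j)\ge 0$ forces $B(i,j) = C(i,j) = 0$. Symmetrically, if $A(i,j)=1$, then $\alpha B(i,j) + (1-\alpha) C(i,j) = 1$ together with $B(i,j),C(i,j)\le 1$ forces $B(i,j) = C(i,j) = 1$. In either case $B(i,j) = C(i,j) = A(i,j)$.

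Since this holds for every $(i,j)$, we conclude $B=C=A$, contradicting the assumption that $B\neq C$. Hence $A$ is an extreme point of $\mathcal{U}(R_{min}^{max},C_{min}^{max})$. There is no real obstacle in this lemma; the argument is a direct consequence of the convexity of the unit interval combined with the fact that $\{0,1\}$ are its extreme points. The substantive content of Theorem \ref{th:th2} lies in the converse direction, namely showing that every matrix in $\mathcal{U}(R_{min}^{max},C_{min}^{max})\smallsetminus \mathcal{P}(R_{min}^{max},C_{min}^{max})$ fails to be extreme, which will presumably require an alternating-cycle construction analogous to the one used for $\Omega_n$, adapted to accommodate row and column sums that can vary within intervals rather than being fixed.
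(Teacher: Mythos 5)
Your proof is correct and follows essentially the same route as the paper's: an entry-wise case analysis on $A(i,j)\in\{0,1\}$ using only the box constraint $0\le B(i,j), C(i,j)\le 1$ to force $B=C=A$. The only cosmetic difference is that you use a general convex combination with weight $\alpha\in(0,1)$ while the paper uses the midpoint $\frac{1}{2}(E+F)$; both suffice since the set is convex.
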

 \begin{proof}
 Again, the proof of lemma \ref{lem:lem3} is relatively straightforward. We provide it here for sake of completeness.
 Let $A$ be a matrix of $\mathcal{P}(R_{min}^{max},C_{min}^{max})$ and let us suppose that $A = \frac{1}{2}(E+F)$ where $E$ and $F$ are two \textit{distinct} matrices in $\mathcal{U}(R_{min}^{max},C_{min}^{max})$. 
 
 For any entry $A(i,j)=0$, we have:
 \begin{eqnarray*}
 \frac{1}{2}( E(i,j) + F(i,j)) = 0.
 \end{eqnarray*}
 Since $E(i,j)\ge 0$ and $F(i,j)\ge 0$, we have $E(i,j)=F(i,j)=0$.
 
 For any entry $A(i,j)=1$, we have:
  \begin{eqnarray*}
 \frac{1}{2}( E(i,j) + F(i,j)) = 1,
 \end{eqnarray*}
 i.e.,
   \begin{eqnarray*}
 E(i,j) + F(i,j) = 2.
 \end{eqnarray*}
Since $0\le E(i,j)\le 1$ and $0\le F(i,j)\le 1$, we have $E(i,j)=F(i,j)=1$.

Therefore, $A=E=F$, which is in contradiction with the hypothesis that $E$ and $F$ are distinct. Therefore, $A$ is not the midpoint of a line segment whose endpoints are in 
$\mathcal{U}(R_{min}^{max},C_{min}^{max})$, and so $A$ is an extreme point of $\mathcal{U}(R_{min}^{max},C_{min}^{max})$.
\end{proof}

Finally, we prove the following lemma:
\begin{lemma}
If a matrix $A$ is an extreme point of $\mathcal{U}(R_{min}^{max},C_{min}^{max})$ then it belongs to $\mathcal{P}(R_{min}^{max},C_{min}^{max})$.
 \label{lem:lem4}
 \end{lemma}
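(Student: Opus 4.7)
The plan is to prove the contrapositive: if $A \in \mathcal{U}(R_{min}^{max},C_{min}^{max})$ has at least one fractional entry, then $A$ is not an extreme point. As in the Birkhoff--von Neumann argument of Section 2, associate with $A$ the bipartite graph $G_A$ whose vertex sets are the row indices $\{1,\ldots,n\}$ and column indices $\{1,\ldots,m\}$, with an edge $\{i,j\}$ whenever $A(i,j) \in (0,1)$. The argument then splits according to whether $G_A$ contains a cycle.

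When $G_A$ contains a cycle, I reuse the construction in Section 2 verbatim: build $N$ on this (necessarily even-length) cycle with alternating $\pm 1$ coefficients, and set $E_1 = A + \epsilon N$, $E_2 = A - \epsilon N$. Every row and column sum is preserved (each row or column touched by the cycle receives exactly one $+1$ and one $-1$ from $N$), so the row and column inequalities remain satisfied; choosing $\epsilon$ small enough to keep all entries in $[0,1]$ expresses $A$ as a nontrivial midpoint.

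The new case, in which $G_A$ is a forest, is precisely where claim \ref{claim:claim2} no longer applies --- $RA(i)$ need not be an integer, so a fractional entry $A(i,j)$ need not force a second fractional entry in its row or column. Instead, pick a connected component $T$ of $G_A$ containing at least one edge and a longest path $P = v_0 v_1 \cdots v_p$ in $T$. Since $T$ is a tree and $P$ is maximal, $v_0$ and $v_p$ are leaves of $T$, and because $T$ is a component of $G_A$ they have degree $1$ in $G_A$. This is the key new observation: if, say, $v_0$ is a row $i_0$, then row $i_0$ contains exactly one fractional entry of $A$, so $RA(i_0)$ is a non-integer in $[r(i_0),R(i_0)]$; since $r(i_0)$ and $R(i_0)$ are integers, this forces the strict inequalities $r(i_0) < RA(i_0) < R(i_0)$, i.e., the row-sum constraint at $i_0$ has slack on both sides. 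Symmetrically if an endpoint is a column.

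With this slack at both endpoints, define $N$ to be zero off $P$ and to take the value $(-1)^{k+1}$ on the $k$-th edge of $P$. At every internal vertex of $P$ the two incident edges of $N$ carry opposite signs, so the corresponding row or column sum of $N$ vanishes; at the two endpoints a single signed edge contributes $\pm 1$, and the strict slack derived above absorbs that $\pm\epsilon$ perturbation. Since all entries on which $N$ is supported lie in $(0,1)$, for sufficiently small $\epsilon>0$ both $A+\epsilon N$ and $A-\epsilon N$ lie in $\mathcal{U}(R_{min}^{max},C_{min}^{max})$, so $A = \tfrac12(E_1+E_2)$ with $E_1 \neq E_2$. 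The only real obstacle is the passage from ``fractional entry'' to ``strict slack in a row or column sum'' when the fractional-entry graph is acyclic; once the longest-path/leaf idea is introduced, the integrality of $R_{min},R_{max},C_{min},C_{max}$ makes the rest essentially automatic.
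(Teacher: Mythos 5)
Your proof is correct and follows essentially the same strategy as the paper's: in both arguments, a cycle of fractional entries yields a sum-preserving $\pm\epsilon$ perturbation, while an acyclic configuration yields a path whose two endpoint lines each contain a single fractional entry, forcing a fractional line sum and hence strict slack between the integer bounds that absorbs the $\pm\epsilon$ change at the endpoints. Your organization via the forest/longest-path/leaf observation is a slightly cleaner way of locating such a path than the paper's explicit walk starting from a line with a single fractional value, but the key lemma and the perturbation construction are the same.
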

 \begin{proof}
 We use a proof by contrapositive, loosely inspired by the proof in \cite{Deng:2019} for doubly substochastic matrices. Let $A$ be a matrix in $\mathcal{U}(R_{min}^{max},C_{min}^{max})$. 
 
 We start with some definitions. An entry in $A$  being neither 0 nor 1 is called a fractional entry, and a row (resp. column) containing at least one fractional entry is called a fractional row (resp. column). 
 A fractional line is either a fractional row or a fractional column. 
 
 As $A \in \mathcal{U}(R_{min}^{max},C_{min}^{max}) \smallsetminus \mathcal{P}(R_{min}^{max},C_{min}^{max})$, 
it has at least one fractional entry and therefore one fractional line. We consider two cases.
\begin{itemize}
\item[Case 1)] \textit{All fractional lines include at least two fractional values.}

The proof is then very similar to the simple proof provided above for the Birkhoff - von Neumann theorem, with no need for claim \ref{claim:claim1}.
This means that we can find a loop $L$ within $A$ whose cardinality  is even (bipartite graph). As before, we write this cycle as
\begin{eqnarray*}
L = \{ (a_1, b_1), (a_2, b_2), \ldots, (a_{2M}, b_{2M})\},
\end{eqnarray*}
where $2M = |L|$. 
We then define the matrix $N$
\begin{eqnarray*}
\begin{cases}
	N(i,j) &= 0 \quad (i,j) \notin C \nonumber \\
	N(a_{2k},b_{2k}) &= 1 \quad k\in \{1,\ldots, M\} \nonumber \\
	N(a_{2k-1},b_{2k-1}) &= -1 \quad k\in \{1,\ldots, M\}.
\end{cases}
\end{eqnarray*}
We also define
\begin{eqnarray*}
\epsilon_{max} = \min \{ A(a_1,b_1), \ldots, A(a_{2M}, b_{2M}), 1 - A(a_1,b_1), \ldots,1-A(a_{2M}, b_{2M}) \}.
\end{eqnarray*}
As all elements in the loop $L$ are fractional, $0 < \epsilon_{max} < 1$. For $\epsilon \in (0, \epsilon_{max}]$, we define $E_1 = A + \epsilon N$ and $E_2 = A - \epsilon N$.
As two consecutive pairs in $L$ leads to the addition and subtraction of the same quantity $\epsilon$ on one row or one column, it is easy to verify 
that the row sums and column sums of $E_1$ and $E_2$ are equal to the row sums and row columns $A$, and therefore $E_1$ and $E_2$ belong to $\mathcal{U}(R_{min}^{max},C_{min}^{max})$.
Since $A = \frac{1}{2}(E_1+E_2)$, $A$ is not an extreme point of $\mathcal{U}(R_{min}^{max},C_{min}^{max})$.

\item[Case 2)] \textit{There exists at least one fractional line that includes a single fractional value.}

Let us assume that one such fractional line is a column $j_1$ (the proof would be the same if it were a row) and let  $A(i_1,j_1)$ be the only fractional value on $j_1$. If row $i_1$ only contains one fractional value, we terminate. Otherwise $i_1$ includes at least two fractional values and therefore we can find $j_2\ne j_1$ such that $A(i_1,j_2)$ is fractional. We continue in this manner, until either (a) we have reached a pair that we have already visited,  or (b) we have reached a line that only contains one fractional value. If we are in case (a), we have identified a loop of fractional values whose cardinality is even and the proof is then similar to \textit{Case 1}. If we are in case (b), we have identified an incomplete loop, i.e. a cycle. If the cycle ends because the following column had a single fractional entry, the cardinality of the cycle is even, and we can again follow exactly the proof from \textit{Case 1}. Otherwise, the cycle has an odd number of elements; we write it as
\begin{eqnarray*}
C = \{ (i_1, j_1), (i_2, j_2), \ldots, (i_{2K-1}, j_{2K-1})\},
\end{eqnarray*}
where $2K-1 = |C|$. We define the matrix $N$ as:
\begin{eqnarray*}
\begin{cases}
			N(i,j) &= 0 \quad (i,j) \notin C \nonumber \\
                        N(i_{2k},j_{2k})  &= 1   \quad k\in \{1,\ldots, K-1\} \nonumber \\
                        N(i_{2k-1},j_{2k-1})  &= -1   \quad k\in \{1,\ldots, K\} .                   
                    \end{cases}
\end{eqnarray*}
Let $\epsilon_{max}=\min\{ A(i_1,j_1), \ldots, A(i_{2K-1},j_{2K-1}), 1-A(i_1,j_1), \ldots, 1-A(i_{2K-1},j_{2K-1})\}$. Since all the values $A(i,j)$ when $(i,j)\in C$ are fractional, $0 < \epsilon_{max} <1$.

Let us define $E_1(\epsilon)=A+\epsilon N$ and $E_2(\epsilon)=A-\epsilon N$. 
If $0 < \epsilon < \epsilon_{max}$, all values in $E_1(\epsilon)$ and $E_2(\epsilon)$ are in the interval $[0,1]$. In addition, by construction, any line sum of $E_1(\epsilon)$ and $E_2(\epsilon)$ is equal to 
the sum of the same line of $A$, \textit{with the exception of column $j_1$ and row $i_{2K-1}$} . Those two lines, however, include a single fractional value. As such, the column sum $CA(j_1)$  and the row sum $RA(i_{2K-1})$ in $A$ are fractional. Since the bounds on those row / column are integer values, 
\begin{eqnarray*}
 r(i_{2K-1})  &< RA(i_{2K-1}) < R(i_{2K-1}), \\
 c(j_1)  &< CA(j_1) < C(j_1),
 \end{eqnarray*}
and it is possible to modify the values of  $A(i_1, j_1)$ and $A(i_{2K-1}, j_{2K-1})$ without compromising the constraints on the corresponding column and row, respectively. Let us define $\epsilon_2 = \min\{CA(j_1)-c(j_1), RA(i_{2K-1})-r(i_{2K-1}), C(j_1)-CA(j_1), R(i_{2K-1})-RA(i_{2K-1})\}$; note the $\epsilon_2$ is strictly fractional, i.e. $0 < \epsilon_2 < 1$.
 If we set  $\epsilon$ such that $0 < \epsilon < \min(\epsilon_{max}, \epsilon_2)$, then $E_1(\epsilon)$ and $E_2(\epsilon)$ belong to $\mathcal{U}(R_{min}^{max},C_{min}^{max})$. Since $A = \frac{1}{2}(E_1(\epsilon)+E_2(\epsilon))$, $A$ is not an extreme point of $\mathcal{U}(R_{min}^{max},C_{min}^{max})$.
 
 \end{itemize}
Note that there are no other cases, as any such case would imply that the matrix $A$ does not contain any fractional line, and therefore would belong to  $\mathcal{P}(R_{min}^{max},C_{min}^{max})$. This concludes the proof of lemma \ref{lem:lem4}.
\end{proof}

\textit{The proof of theorem \ref{th:th2} a)}  is then directly the consequence of lemma \ref{lem:lem2}, \ref{lem:lem3}, and \ref{lem:lem4}.

\section{The polytope $\mathcal{U}^k(R_{min}^{max},C_{min}^{max})$}
The set of matrices $\mathcal{U}^k(R_{min}^{max},C_{min}^{max})$ is the subset of the polytope $\mathcal{U}(R_{min}^{max},C_{min}^{max})$,
whose members $A$ satisfy the additional constraint that the total sum of  their elements, $\sigma(A)$ is set to a given positive integer value $k$. Note that this
set is non empty only if $0 < k \le \sum_{i=1}^n R(i)$ and $k \le \sum_{j=1}^n C(j)$. Here we show that $\mathcal{U}^k(R_{min}^{max},C_{min}^{max})$ forms a convex polytope whose extreme values are the matrices in $\mathcal{P}^k(R_{min}^{max},C_{min}^{max})$ using the same strategy we used for $\mathcal{U}(R_{min}^{max},C_{min}^{max})$.

\begin{lemma}
 $\mathcal{U}^k(R_{min}^{max},C_{min}^{max})$ is convex.
 \label{lem:lem5}
 \end{lemma}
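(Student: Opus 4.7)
The plan is to observe that $\mathcal{U}^k(R_{min}^{max},C_{min}^{max})$ is the intersection of $\mathcal{U}(R_{min}^{max},C_{min}^{max})$ with the affine hyperplane $\{A : \sigma(A) = k\}$. Since Lemma \ref{lem:lem2} already tells us that $\mathcal{U}(R_{min}^{max},C_{min}^{max})$ is convex, and any affine hyperplane is convex, the intersection is convex. There is essentially no obstacle here: this is a one-line argument from standard convex analysis.

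For the sake of a self-contained proof following the same template as Lemma \ref{lem:lem2}, I would spell it out directly. Take two matrices $A$ and $B$ in $\mathcal{U}^k(R_{min}^{max},C_{min}^{max})$, a real number $\alpha \in [0,1]$, and form $E = \alpha A + (1-\alpha) B$. By Lemma \ref{lem:lem2}, $E$ already satisfies all the bound constraints, i.e. $E \in \mathcal{U}(R_{min}^{max},C_{min}^{max})$. The only thing left to verify is the sum constraint. Using linearity of $\sigma$,
\begin{equation*}
\sigma(E) = \alpha\, \sigma(A) + (1-\alpha)\, \sigma(B) = \alpha k + (1-\alpha) k = k,
\end{equation*}
so $E \in \mathcal{U}^k(R_{min}^{max},C_{min}^{max})$, which establishes convexity.

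The proof is genuinely routine, so I would keep it as short as possible. The real work of characterizing the extreme points will come in the subsequent lemmas, where, just as in the passage from Lemma \ref{lem:lem1}a to Lemma \ref{lem:lem1}b, one will need to ensure that the perturbation matrix $N$ used to split a non-extreme point satisfies $\sigma(N) = 0$. That requirement is what makes the odd-cycle Case 2 of Lemma \ref{lem:lem4} delicate in the $\mathcal{U}^k$ setting, but it is not part of the current statement.
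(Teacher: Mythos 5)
Your proof is correct and follows essentially the same route as the paper: the paper likewise checks the bound constraints by citing the proof of Lemma \ref{lem:lem2} and handles the sum constraint by linearity of $\sigma$, which is exactly your computation $\sigma(E)=\alpha k+(1-\alpha)k=k$. Your opening observation that $\mathcal{U}^k(R_{min}^{max},C_{min}^{max})$ is the intersection of a convex set with an affine hyperplane is a slightly cleaner packaging of the same argument, so nothing further is needed.
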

\begin{proof}
The proof  is straight forward. Briefly, let $A$ and $B$ be two matrices belonging to $\mathcal{U}^k(R_{min}^{max},C_{min}^{max})$, $\alpha$ a real number in $[0,1]$, and $E = \alpha A + (1-\alpha) B$.  To prove that $E \in \mathcal{U}^k(R_{min}^{max},C_{min}^{max})$, we need to show that:
\begin{itemize}
\item[1)] $E(i,j) \in [0,1]$: this comes from the convexity of $[0,1]$ (see proof of lemma \ref{lem:lem2}).
\item[2)] $r(i) \le \sum_{j=1}^{m} E(i,j) \le R(i)$ for all $i\in [1,n]$: this comes from the convexity of $[r(i),R(i)]$ (see proof of lemma \ref{lem:lem2}).
\item[3)] $c(j) \le \sum_{i=1}^{n} E(i,j) \le C(j)$ for all $j\in [1,m]$: this comes from the convexity of $[c(j),C(j)]$ (see proof of lemma \ref{lem:lem2}).
\item[4)] $\sigma(E) = \sum_{i=1}^{n} \sum_{j=1}^{m} E(i,j) = k$: this comes from the linearity of the operator $\sigma$.
\end{itemize}
\end{proof}

\begin{lemma}
 If $A$ belongs to $\mathcal{P}^k(R_{min}^{max},C_{min}^{max})$, then $A$ is an extreme point of $\mathcal{U}^k(R_{min}^{max},C_{min}^{max})$.
 \label{lem:lem6}
 \end{lemma}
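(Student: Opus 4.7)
The plan is to reduce this to Lemma~\ref{lem:lem3} by a general set-theoretic fact: if $S \subseteq T$ and $x \in S$ is an extreme point of $T$, then $x$ is also an extreme point of $S$. This is immediate, because any nontrivial convex combination in $S$ expressing $x$ is also a nontrivial convex combination in $T$.

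Concretely, I would first observe that $\mathcal{P}^k(R_{min}^{max},C_{min}^{max}) \subseteq \mathcal{P}(R_{min}^{max},C_{min}^{max})$ and $\mathcal{U}^k(R_{min}^{max},C_{min}^{max}) \subseteq \mathcal{U}(R_{min}^{max},C_{min}^{max})$, both of which follow directly from the definitions. Then, given $A \in \mathcal{P}^k(R_{min}^{max},C_{min}^{max})$, Lemma~\ref{lem:lem3} already tells us that $A$ is an extreme point of $\mathcal{U}(R_{min}^{max},C_{min}^{max})$. Suppose for contradiction that $A = \frac{1}{2}(E+F)$ with $E, F \in \mathcal{U}^k(R_{min}^{max},C_{min}^{max})$ distinct. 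Since $E, F$ also lie in $\mathcal{U}(R_{min}^{max},C_{min}^{max})$, this contradicts $A$ being extreme in the larger polytope. Hence $A$ is extreme in $\mathcal{U}^k(R_{min}^{max},C_{min}^{max})$.

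Alternatively, if one prefers a self-contained argument mirroring Lemma~\ref{lem:lem3}, I would repeat the same $0/1$-entry analysis directly: for any $(i,j)$ with $A(i,j)=0$, nonnegativity of $E$ and $F$ forces $E(i,j)=F(i,j)=0$, and for any $(i,j)$ with $A(i,j)=1$, the upper bound $E(i,j),F(i,j)\le 1$ forces $E(i,j)=F(i,j)=1$. Since the only entries of $A$ are $0$ and $1$, this yields $E=F=A$, contradicting distinctness. The constraint $\sigma(E)=\sigma(F)=k$ is not even needed for this direction.

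There is no real obstacle here: the statement is a straightforward consequence of Lemma~\ref{lem:lem3}, and the additional cardinality constraint $\sigma(A)=k$ only shrinks the ambient polytope, which can only preserve or create extreme points, never destroy them.
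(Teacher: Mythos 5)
Your proposal is correct, and both of your routes work. The paper's own proof of Lemma~\ref{lem:lem6} is literally ``identical to the proof of Lemma~\ref{lem:lem3}'', i.e.\ it reruns the entrywise $0/1$ analysis --- which is exactly your alternative, self-contained argument, including your (accurate) observation that the constraint $\sigma(A)=k$ is never used. Your primary route is a genuinely different packaging: instead of repeating the argument, you invoke the general fact that if $S\subseteq T$ and $x\in S$ is an extreme point of $T$, then $x$ is an extreme point of $S$, and then apply Lemma~\ref{lem:lem3} together with the inclusions $\mathcal{P}^k(R_{min}^{max},C_{min}^{max})\subseteq\mathcal{P}(R_{min}^{max},C_{min}^{max})$ and $\mathcal{U}^k(R_{min}^{max},C_{min}^{max})\subseteq\mathcal{U}(R_{min}^{max},C_{min}^{max})$. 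This is sound (any nontrivial decomposition inside the smaller set is one inside the larger set) and is arguably the cleaner formalization of what the paper gestures at: it isolates the reason the cardinality constraint is harmless --- cutting down to a subset can only preserve or create extreme points among the points that survive, never destroy them --- and it avoids duplicating the proof. The trade-off is negligible: the direct argument is equally short here, but your reduction would scale better if the entrywise analysis were more involved. No gaps in either version.
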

 \begin{proof}
 Again, the proof of lemma \ref{lem:lem6} is straightforward and identical to the proof of lemma \ref{lem:lem3}.
 \end{proof}
 
 Finally, we prove:
 \begin{lemma}
If a matrix $A$ is an extreme point of $\mathcal{U}^k(R_{min}^{max},C_{min}^{max})$ then it belongs to $\mathcal{P}^k(R_{min}^{max},C_{min}^{max})$.
 \label{lem:lem7}
 \end{lemma}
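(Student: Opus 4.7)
The plan is to argue by contrapositive, extending lemma \ref{lem:lem4} with the additional requirement that the perturbation matrix $N$ satisfy $\sigma(N)=0$ so that the perturbed matrices $A\pm\epsilon N$ still have total sum $k$ and hence stay in $\mathcal{U}^k(R_{min}^{max},C_{min}^{max})$. Given $A\in\mathcal{U}^k(R_{min}^{max},C_{min}^{max})\setminus\mathcal{P}^k(R_{min}^{max},C_{min}^{max})$, I form the bipartite graph $G_A$ whose vertices are the rows and columns of $A$ and whose edges are the fractional entries of $A$. Call a row or column vertex \emph{free} if the corresponding line sum of $A$ lies strictly between its integer bounds, and \emph{tight} otherwise; a useful initial observation is that every leaf of $G_A$ is automatically free, since a single incident fractional entry makes the line sum non-integer while the bounds are integer.

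If $G_A$ contains a cycle, then bipartiteness forces the cycle to have even length, and the alternating $\pm 1$ construction of Case 1 of lemma \ref{lem:lem4} yields an $N$ that in addition satisfies $\sigma(N)=0$; this settles the first case. Otherwise $G_A$ is a forest and the main task is to locate an even-length path in some tree of $G_A$ joining two free vertices. Alternating $\pm 1$ along such a path then produces an admissible $N$: equal numbers of $+1$ and $-1$ give $\sigma(N)=0$, neighbouring signs cancel at every internal vertex, and the two endpoints are allowed to move because they are free. My case analysis is: (i) if some tree has at least three leaves, then by the pigeonhole principle two leaves lie on the same side of the bipartition and are joined by an even-length path; (ii) otherwise every tree is a path, and if any such tree-path has even length it can be used directly; (iii) if every tree is an odd-length path but one of them has a free internal vertex $v_k$, cutting at $v_k$ produces two sub-paths, one of even length, with endpoints a leaf and $v_k$ (both free).

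The only remaining configuration is that every tree of $G_A$ is an odd-length path whose internal vertices are all tight. In that regime the tightness of each internal vertex forces the two incident fractional entries to sum to $1$, so along a single such tree-path of odd length $2m+1$ the fractional values alternate between some $x\in(0,1)$ and $1-x$, and the contribution of that tree to the total of all fractional entries of $A$ equals $m+x\notin\mathbb{Z}$. Since $\sigma(A)=k$ is an integer, the total of all fractional entries of $A$ is an integer, which forces at least two such trees to be present in $G_A$. I can then combine two of them by applying the alternating construction to each with opposite starting signs; the two trees contribute $+1$ and $-1$ respectively to $\sigma(N)$, their sum is $0$, and the tight internal constraints are preserved tree-by-tree by the alternation, while the four leaf endpoints are again free. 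The hypothesis $\sigma(A)=k$ is used exactly once, in the parity argument that rules out the single-tree pathological configuration, and that is where I expect the main difficulty of the proof to lie.
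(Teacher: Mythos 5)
Your proof is correct, and its two perturbation mechanisms --- an alternating $\pm\epsilon$ modification along an even cycle or along an even path joining two free (the paper's ``mutable'') lines, and the pairing of two odd paths with opposite orientations so that $\sigma(N)=0$ --- are exactly the mechanisms the paper uses. Where you genuinely diverge is in how you certify that one of these configurations must exist. The paper grows a path greedily from a line containing a single fractional entry and invokes a short claim (one mutable column forces a second, since otherwise $\sigma(A)$ would be fractional) to produce either an even truncation or a second odd path to pair with the first. You instead classify the entire bipartite graph of fractional entries: a cycle, a tree with at least three leaves, an even path, an odd path with a free internal vertex, and finally the residual case of odd paths all of whose internal vertices are tight, which you eliminate with the per-component parity computation $m+x\notin\mathbb{Z}$. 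That residual-case argument is a different, and more self-contained, use of the hypothesis $\sigma(A)=k\in\mathbb{Z}$ than the paper's Claim 4, and your exhaustive structural case analysis makes explicit several branches the paper passes over quickly (what happens when the greedy path terminates early, or how several single-fractional-entry lines can interact). The price is a longer case analysis; the benefit is that every branch is explicitly justified and the two uses of integrality of $k$ are cleanly isolated.
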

 \begin{proof}
 We use a proof by contrapositive with similar ideas as those used for the equivalent proof for lemma \ref{lem:lem4}. Let $A \in \mathcal{U}^k(R_{min}^{max},C_{min}^{max}) \smallsetminus \mathcal{P}^k(R_{min}^{max},C_{min}^{max})$. As such, it has at least one fractional entry and therefore one fractional line. We consider two cases.
\begin{itemize}
\item[Case 1)] \textit{All fractional lines include at least two fractional values.}
The proof is then equivalent to the proof of \textit{Case 1} for lemma \ref{lem:lem4}.

\item[ Case 2)] \textit{There exists at least one fractional line that includes a single fractional value.}

Let us assume that one such fractional line is a column $j_1$ (the proof would be the same if it were a row) and let  $A(i_1,j_1)$ be the only fractional value on $j_1$. Just like in \textit{Case 2} of lemma \ref{lem:lem4}, we proceed from this value to generate a cycle of fractional values. If the cardinality of this cycle is even, we are done. A problem arises, however, if the cardinality of the cycle is odd:
\begin{eqnarray*}
C = \{ (i_1, j_1), (i_2, j_2), \ldots, (i_{2K-1}, j_{2K-1})\},
\end{eqnarray*}
where $2K-1 = |C|$. We could still define the matrix $N$ as:
\begin{eqnarray}
\begin{cases}
			N(i,j) &= 0 \quad (i,j) \notin C \nonumber \\
                        N(i_{2k},j_{2k})  &= 1   \quad k\in \{1,\ldots, K-1\} \nonumber \\
                        N(i_{2k-1},j_{2k-1})  &= -1   \quad k\in \{1,\ldots, K\} .                   
                    \end{cases}
\label{eqn:N}
\end{eqnarray}
However, if we define $E_1(\epsilon)=A+\epsilon N$ and $E_2(\epsilon)=A-\epsilon N$, even with an $\epsilon$ small enough, we have $\sigma(E_1(\epsilon))=\sigma(A) -\epsilon$ and $\sigma(E_2(\epsilon))=\sigma(A)+\epsilon$, i.e. $E_1(\epsilon)$ and $E_2(\epsilon)$ do not belong to $\mathcal{U}^k(R_{min}^{max},C_{min}^{max})$.

To address this issue, let us first define that a row $i$ (resp. a column $j$) is \textit{mutable} if and only if $r(i) < RA(i) < R(i)$ (resp.  $c(j) < CA(j) < C(j)$) where $RA(i)$ is the sum of the elements of row $i$ and $CA(j)$ is the sum of the elements of column $j$.
In other word, a row or column is mutable if it is possible to add or remove a well chosen small non-zero value to one of its elements without violating the constraints on its sum.
For example, the column $j_1$ of $A$ defined above is mutable: it contains a single fractional value, hence $CA(j_1)$ is fractional and therefore satisfies the definition of mutable.
We now make the following claim:
\begin{claim}
If the matrix $A$ contains one mutable column, then it contains at least 2 mutable columns.
\label{claim:claim4}
\end{claim}
\begin{proof}
Let $l$ be a mutable column of $A$. 
if all other columns $j$ of $A$ were non mutable, then their column sums $CA(j)$ would all be integer values, and $\sigma(A)=CA(l) + \sum_{j=1, j\ne l}^{m}CA(j)$ would then be fractional, which is a contradiction with the fact that $\sigma(A)=k$. Therefore, there exists a least another column of $A$ that is mutable.
\end{proof}

Based on claim \ref{claim:claim4}, as $j_1$ is mutable, there exists another column $j_2 \ne j_1$ that is mutable. We consider then two cases:
\begin{itemize}
\item [a)] \textit{The cycle $C$ include $j_2$}. 

We truncate the cycle to the first vertex that belongs to the column $j_2$. This truncated cycle, $C'$, includes an even number of vertices.
We can then conclude as in \emph{Case 1} by considering this cycle.

\item[b)] \textit{The cycle $C$ does not include $j_2$}. 

Let  $A(i_2,j_2)$ be a fractional value on $j_2$. We build a second cycle $D$ in $A$ starting from this value. If the cardinality of this cycle is even, we can follow exactly the proof from \textit{Case 1}. Otherwise, the cycle has an odd number of elements; we write it as
\begin{eqnarray*}
D = \{ (i'_1, j'_1), (i'_2, j'_2), \ldots, (i'_{2M-1}, j'_{2M-1})\},
\end{eqnarray*}
where $2M-1 = |D|$. We define the matrix $N'$ as:
\begin{eqnarray*}
\begin{cases}
			N'(i,j) &= 0 \quad (i,j) \notin C \nonumber \\
                        N'(i_{2k},j_{2k})  &= -1   \quad k\in \{1,\ldots, M-1\} \nonumber \\
                        N'(i_{2k-1},j_{2k-1})  &= 1   \quad k\in \{1,\ldots, M\} .                   
                    \end{cases}
\end{eqnarray*}
For $\epsilon$ small enough (i.e. smaller than the characteristic $\epsilon$ for the two cycles),  we define $E_1(\epsilon)=A+\epsilon N + \epsilon N'$ and $E_2(\epsilon)=A-\epsilon N - \epsilon N'$ where the matrix $N$ was previously defined in equation \ref{eqn:N}. 
With the right choice of $\epsilon$, All values in $E_1(\epsilon)$ and $E_2(\epsilon)$ are in the interval $[0,1]$. In addition, by construction, any row/column sum of $E_1(\epsilon)$ and $E_1(\epsilon)$ are equal to 
to that of $A$, \textit{with the exception} of columns $j_1$, $j'_1$ and rows $i_{2K-1}$, , and $i'_{2M-1}$. Those four lines, however, are mutable: as $j_1$, $i_{2K-1}$, and $i'_{2M-1}$ contain a single fractional value, there are mutable, and $j'_1=j_2$ was chosen as it is mutable.  The constraints on the sums of all rows and columns of $E_1(\epsilon)$ and $E_2(\epsilon)$ are therefore satisfied. In addition, 
\begin{eqnarray*}
\sigma(E_1(\epsilon))&=&\sigma(A) + \epsilon \sigma{N} + \epsilon \sigma(N') \\
&=& k + \epsilon (1 \times (K-1) + (-1) \times K) + \epsilon ( -1 \times (M-1) + 1 \times M) \\
&=& k - \epsilon + \epsilon = k, \\
\sigma(E_2(\epsilon))&=&\sigma(A) - \epsilon \sigma{N} - \epsilon \sigma(N') \\
&=&k - \epsilon (1 \times (K-1) + (-1) \times K) - \epsilon ( -1 \times (M-1) + 1 \times M) \\
&=&k + \epsilon - \epsilon = k.
\end{eqnarray*}
Therefore, $E_1(\epsilon)$ and $E_2(\epsilon)$ belong to $\mathcal{U}^k(R_{min}^{max},C_{min}^{max}$. Since $A = \frac{1}{2}(E_1(\epsilon)+E_2(\epsilon))$, $A$ is not an extreme point of $\mathcal{U}^k(R_{min}^{max},C_{min}^{max})$.
\end{itemize}

 \end{itemize}
This concludes the proof of lemma \ref{lem:lem7}.
\end{proof}

\textit{The proof of theorem \ref{th:th2} b)}  is then just the consequence of lemma \ref{lem:lem5}, \ref{lem:lem6}, and \ref{lem:lem7}.

\vspace{0.2in}

{\bf Acknowledgment.} 
The work discussed here originated from a visit by P.K. at the Institut de Physique Th\'{e}orique, CEA Saclay, France, during the fall of 2022. He thanks them for their hospitality and financial support.

\section*{References}


\end{document}